\newtheorem{theorem}{Theorem}[section]
\newtheorem{corollary}{Corollary}
\newtheorem{lemma}[theorem]{Lemma}
\newtheorem{proposition}{Proposition}
\theoremstyle{definition}
\newtheorem{definition}[theorem]{Definition}
\newtheorem{remark}{Remark}
\newcolumntype{R}[1]{>{\raggedleft\arraybackslash }b{#1}}
\newcolumntype{L}[1]{>{\raggedright\arraybackslash }b{#1}}
\newcolumntype{C}[1]{>{\centering\arraybackslash }b{#1}}
\title[Optimality conditions of a SIR model with fractional derivatives]%
{Necessary optimality conditions of a reaction-diffusion SIR model with ABC fractional derivatives}
\author[M. R. Sidi Ammi, M. Tahiri and Delfim F. M. Torres]{}
\subjclass{34A08, 49K20; 35K57, 47H10}
\keywords{Epidemic model; Optimality conditions; Reaction-diffusion equations; 
Atangana--Baleanu--Caputo fractional derivatives; Numerical simulations}
\email{sidiammi@ua.pt, rachidsidiammi@yahoo.fr}
\email{my.mustafa.tahiri@gmail.com}
\email{delfim@ua.pt}
\thanks{$^*$ Corresponding author: M. R. Sidi Ammi}
\begin{document}

\maketitle

\centerline{\scshape Moulay Rchid Sidi Ammi$^{a*}$, Mostafa Tahiri$^a$ and Delfim F. M. Torres$^b$}
\medskip
{\footnotesize
\centerline{$^a$Department of Mathematics, AMNEA Group, Laboratory MAIS,}
\centerline{ Faculty of Sciences and Techniques, Moulay Ismail University of Meknes, Morocco}
\centerline{$^b$Center for Research and Development in Mathematics and Applications (CIDMA),}
\centerline{Department of Mathematics, University of Aveiro, 3810-193 Aveiro, Portugal.}
} 

\bigskip

\centerline{}

% ----------------------------------------

\begin{abstract}
The main aim of the present work is to study and analyze 
a reaction-diffusion fractional version of the SIR epidemic 
mathematical model by means of the non-local and non-singular 
ABC fractional derivative operator with complete memory effects.  
Existence and uniqueness of solution for the proposed fractional
model is proved.  Existence of an optimal control is also established. 
Then, necessary optimality conditions are derived. As a consequence, 
a characterization of the optimal control is given. Lastly, numerical 
results are given with the aim to show the effectiveness of the 
proposed control strategy, which provides significant results using 
the AB fractional derivative operator in the Caputo sense, comparing 
it with the classical integer one. The results show the importance 
of choosing very well the fractional characterization of the
order of the operators.
\end{abstract}

\maketitle
	
% -------------------------------------------

\section{Introduction}
\label{sec1}

Fractional derivatives give rise to theoretical models that allow 
a significant improvement in the fitting of real data when 
compared with analogous classical models \cite{MR3719831}. 
For real data of Florida Department of Health from September 2011 to July 2014, 
some authors conclude that the absolute error between the solutions 
obtained statistically and that of fractional models are smaller than those 
obtained by models of integer derivatives \cite{ref2}. In the fractional 
calculus literature, systems using fractional derivatives 
give a more realistic behavior \cite{ref1,ref3,ref4}. There exists 
many definitions of fractional derivative \cite{ref4}. Among the more well-known 
fractional derivatives, we can cite the Riemann--Liouville one. It is not always 
suitable for modeling physical systems, because the Riemann--Liouville derivative 
of a constant is not zero, and the initial conditions of associated Cauchy problems 
are expressed by fractional derivatives. Caputo fractional derivatives
offers another alternative, where the derivative of a constant is null 
and initial conditions are expressed as in the classical case 
of integer order derivatives \cite{ref3,ref4,ref5}. However, 
the kernel of this derivative has a singularity. Fractional derivatives 
that possess a non-singular kernel have aroused more interest from the scientific community. 
This is due to the non-singular memory of the Mittag--Leffler function 
and also to the non-obedience of the algebraic criteria of associativity 
and commutativity. The ABC fractional derivative is sometimes preferable 
for modeling physical dynamical systems, giving a good description 
of the phenomena of heterogeneity and diffusion at different scales 
\cite{ref6,ref7,ref8}.

Fractional calculus plays an important role in many areas 
of science and engineering. It also finds application 
in optimal control problems. The principle of mathematical theory 
of control is to determine a state and a control for a dynamic 
system during a specified period to optimize a given objective \cite{ref9}. 
Fractional optimal control problems have been formulated and studied 
as fractional problems of the calculus of variations. Some authors 
have shown that fractional differential equations are more accurate 
than integer-order differential equations, and that fractional controllers 
work better than integer-order controllers \cite{ref10,ref11,ref12,ref13}. 
In \cite{ref20}, Yuan et al. have studied problems of fractional optimal 
control via left and right fractional derivatives of Caputo.  
A numerical technique for the solution of a class of fractional 
optimal control problems, in terms of both Riemann--Liouville and  
Caputo fractional derivatives, is presented in \cite{ref15}. Authors in 
\cite{ref16,ref17} present a pseudo-state-space fractional optimal control 
problem formulation. Fixed and free final-time fractional optimal control problems 
are considered in \cite{ref18,ref19}. Guo \cite{ref21} formulates 
a second-order necessary optimality condition for fractional optimal control 
problems in the sense of Caputo. Optimal control of a fractional-order 
HIV-immune system, in terms of Caputo fractional derivatives, 
is discussed in \cite{ref22}. In \cite{ref23}, authors proposed 
a fractional-order optimal control model for malaria infection 
in terms of the Caputo fractional derivative. Optimal control 
of fractional diffusion equations has also been studied by several authors. 
For instance, in \cite{ref24}, Agrawal considers two problems, the simplest 
fractional variation problem and a fractional variational problem in Lagrange form. 
For both problems, the author developed Euler--Lagrange type necessary conditions, 
which must be satisfied for the given functional to have an extremum. In \cite{ref1}, 
authors prove necessary optimality conditions of a nonlocal thermistor problem 
with ABC fractional time derivatives.

Several infectious diseases confer permanent immunity against reinfection. 
This type of diseases can be modeled by the $SIR$ model. 
The total population $(N)$ is divided into three compartments with 
$N = S + I + R$, where $S$ is the number of susceptible 
(those able to contract the disease), $I$ is the number 
of infectious individuals (those capable of transmitting the disease), 
and $R$ is the number of individuals recovered (those who have recovered and become immune). 
Vaccines are extremely important and have been proved to be most effective 
and cost-efficient method of preventing infectious diseases, such as measles, 
polio, diphtheria, tetanus, pertussis, tuberculosis, etc. The study of fractional 
calculus with a non-singular kernel is gaining more and more attention. 
Compared with classical fractional calculus with a singular kernel, 
non-singular kernel models can describe reality more accurately, 
which has been shown recently in a variety of fields such as physics, 
chemistry, biology, economics, control, porous media, aerodynamics and so on. 
For example, extensive treatment and various applications of fractional 
calculus with non-singular kernel has been discussed in the works of 
Atangana and Baleanu \cite{ref7}, and Djida et al. \cite{djida}. 
It has been demonstrated that fractional order differential equations (FODEs) 
with non-singular kernels give rise to dynamic system models that are more accurately.

In this work, we consider an optimal control problem for the
reaction-diffusion SIR system with Atangana--Baleanu fractional 
derivative in the Caputo sense (the ABC operator). Our aim is 
to study the effect of non-local memory and vaccination strategies 
on the cost, needed to control the spread of infectious diseases.  
Our results generalize to the ABC fractional setting previous studies 
of classical control theory presented in \cite{ref27}.
The considered model there does not explain the influence of a complete memory 
of the system. For that, we extend such nonlinear system of first order differential  
equations to a fractional-order one in the ABC sense. We have further improved 
the cost and effectiveness of proposed control strategy during a given period of time.

This paper is organized as follows. Some important definitions related 
to the ABC fractional derivative operator and its properties are presented 
in Section~\ref{sec2}, while the underlying fractional reaction-diffusion 
SIR mathematical model is formulated in Section~\ref{sec3}. This led 
to the necessity of proving existence and uniqueness of solution 
to the proposed fractional model as well as existence of an optimal control. 
These results are extensively discussed in Sections~\ref{sec4} and \ref{sec5}. 
Section~\ref{sec6} is devoted to necessary optimality conditions. 
Interesting numerical tests, showing the importance of choosing very well the fractional
characterization $\alpha$, are given in Section~\ref{sec7}. Finally,  
conclusions of the present study are widely discussed in Section~\ref{sec8}.

% --------------------------------------------------------

\section{Preliminary results}
\label{sec2}

We now recall some properties on the Mittag--Leffler function
and the definition of ABC fractional time derivative.
First, we define the two-parameter Mittag--Leffler function
$E_{\alpha, \xi}(z)$, as the family of entire functions
of $z$ given by
$$
E_{\alpha, \xi}(z) = \sum_{k=0}^{\infty}
\frac{z^{k}}{\Gamma(k\alpha + \xi)},
\quad z \in \mathbb{C},
$$
where $\Gamma(\cdot)$ denotes the Gamma function
$$
\Gamma(z)= \int_{0}^{\infty} t^{z-1} e^{-t} dt,
\quad Re(z) > 0.
$$
Observe that the Mittag--Leffler function is a generalization 
of the exponential function: $E_{1, 1}(z)=e^{z}$.
For more information about the definition of fractional derivative 
in the sense of Atangana--Baleanu, the reader can see \cite{ref7,ref8}.

\begin{definition}
\label{def:1.1}
For a given function $g \in H^{1}(a, T)$, $T>a$,
the Atangana--Baleanu fractional derivative in Caputo sense,
shortly called the ABC fractional derivative
of $g$ of order $\alpha \in (0, 1)$ with base point $a$,
is defined at a point $t \in (a, T)$ by
\begin{equation}
\label{eq3bis}
^{abc}_{a}D_{t}^{\alpha} g(t)=\frac{B(\alpha)}{1-\alpha}
\int_{a}^{t} g'(\tau) E_{\alpha}[-\gamma (t-\tau)^{\alpha}] d\tau,
\end{equation}
where $\gamma= \frac{\alpha}{1-\alpha}$, $E_{\alpha, 1}=E_{\alpha}$
stands for the Mittag--Leffler function, and
$B(\alpha)= (1-\alpha) + \frac{\alpha }{\Gamma(\alpha)}$.
Furthermore, the Atangana--Baleanu fractional integral
of order $\alpha\in (0, 1)$ with base point $a$ is defined as
\begin{equation}
\label{eq4bis}
^{ab}_{a}I_{t}^{\alpha} g(t)= \frac{1-\alpha}{B(\alpha)} g(t)
+\frac{\alpha}{B(\alpha)\Gamma(\alpha)}
\int_{a}^{t} g(\tau) (t-\tau)^{\alpha-1} d\tau.
\end{equation}
\end{definition}

\begin{remark}
The usual ordinary derivative $\partial_{t}$ is obtained by letting 
$\alpha \rightarrow 1$ in \eqref{eq3bis}. If $\alpha =0, 1$ in \eqref{eq4bis},
then we get the initial function and the classical integral, respectively.
\end{remark}

\begin{definition}[See \cite{ref6}]
\label{def:1.1'}
For a given function $g \in H^{1}(a, T)$, $T>a$,
the backward Atangana--Baleanu fractional derivative in Caputo sense
of $g$ of order $\alpha \in (0, 1)$ with base point $T$,
is defined at a point $t \in (a, T)$ by
\begin{equation}
\label{eq3'bis}
^{abc}_{T}D_{t}^{\alpha} g(t)=-\frac{B(\alpha)}{1-\alpha}
\int_{t}^{T} g'(\tau) E_{\alpha}[-\gamma (t-\tau)^{\alpha}] d\tau.
\end{equation}
\end{definition}

% -----------------------------------------------

\section{Model formulation}
\label{sec3}

The SIR model is one of the simplest compartmental models. It was first 
used by Kermack and McKendrick in 1927. It has subsequently been applied 
to a variety of diseases, especially airborne childhood diseases 
with lifelong immunity upon recovery, such as measles, mumps, rubella, and pertussis.
We assume that the populations are in a spatially homogeneous environment 
and their densities depend on space, reflecting the spacial spread of the disease.
Then, the model will be formulated as a system of reaction-diffusion equations.
In this section, we formulate an optimal control of a nonlocal fractional SIR 
epidemic model with parabolic equations and boundary conditions. We consider 
that the movement of the population depends on the time $t$ and the space $x$.
Furthermore, all susceptible vaccinates are transferred directly to the recovered class.

Let $Q_{T}=[0, T]\times\Omega$ and $\Sigma_{T}=[0, T]\times\partial\Omega$, 
where $\Omega$ is a fixed and bounded domain in $\mathbb{R}^2$ 
with smooth boundary $\partial\Omega$, and $[0,T]$ is a finite interval. 
The dynamic of the ABC fractional SIR system with control is given by
\begin{equation}
\label{eq1}
\begin{aligned}
^{abc}_{0}D{_t^{\alpha}}S(t,x) 
&= \lambda_1\Delta S(t,x)+\mu N(t,x)-\beta S(t,x) I(t,x)-dS(t,x)\\
&\quad-u(t,x)S(t,x),\\[2ex]
^{abc}_{0}D{_t^{\alpha}}I(t,x) 
&= \lambda_2\Delta I(t,x) +\beta S(t,x) I(t,x)-(d+r)I(t,x), \hspace{0.2cm} (t,x)\in Q_{T},\\[2ex]
^{abc}_{0}D{_t^{\alpha}}R(t,x) 
&= \lambda_3\Delta R(t,x) + r I(t,x)- dR(t,x) + u(t,x)S(t,x),
\end{aligned}
\end{equation}
with the homogeneous Neumann boundary conditions
\begin{equation}
\label{eq2}
\frac{\partial S(t,x)}{\partial \nu}
=\frac{\partial I(t,x)}{\partial \nu}
=\frac{\partial R(t,x)}{\partial \nu}=0, 
\quad (t,x)\in \Sigma_{T},
\end{equation}
and the following initial conditions of the three populations, 
which are considered positive for biological reasons:
\begin{equation}
\label{eq3}
S(0,x)=S_0, \quad I(0,x)=I_0 \quad 
\text{ and } \quad R(0,x)=R_0, \quad x\in\Omega.
\end{equation}
The positive constants $\mu$, $r$ and $d$  are respectively the birth rate, 
the recovery rate of the infective individuals and the natural death rate. 
Susceptible individuals acquire infection by the contact with individuals 
in the class $I$ at a rate $\beta SI$, where $\beta$ is the infection coefficient. 
Positive constants $\lambda_1$, $\lambda_2$, $\lambda_3$ denote the diffusion 
coefficients for the susceptible, infected and recovered individuals.  
The control $u$ describes the effect of vaccination. It is assumed that 
vaccination transforms susceptible individuals to recovered ones 
and confers them immunity.  The notation $\Delta=\frac{\partial^2}{\partial x^2}
+\frac{\partial^2}{\partial y^2}$ represents the usual Laplacian operator 
in two-dimensional space; $\nu$ is the outward unit normal vector 
on the boundary with $\frac{\partial}{\partial\nu}=\nu$; and 
$\nabla$ is the normal derivative on $\partial\Omega$. 
The no-flux homogeneous Neumann boundary conditions imply 
that model \eqref{eq1} is self-contained and there is a dynamic 
across the boundary, but there is no emigration.

Since the vaccination is limited and represents an economic burden, 
one important issue and goal is to know how much we should spend 
in vaccination to reduce the number of infections and, at the same time, 
save the cost of vaccination program. This can be mathematically 
interpreted by optimizing the following objective functional:
\begin{equation}
\label{eq4}
J(S,I,R,u)=\|I(t,x)\|_{L^2(Q_{T})}^{2} + \|I(T,\cdot)\|_{L^2(\Omega)}^{2} 
+ \theta\|u(t,x)\|_{L^2(Q_{T})}^{2},
\end{equation}
where $\theta$ is a weight constant for the vaccination control $u$, 
which belongs to the set 
\begin{equation}
\label{eq5}
U_{ad}=\{u\in L^\infty(Q_{T}); \|u\|_{L^\infty(Q_{T})}<1 
\quad \mbox{ and } \quad u>0\}
\end{equation}
of admissible controls. Let $y=(y_{1}, y_{2}, y_{3})= (S, I, R)$, 
$y^{0}=(y_{1}^{0}, y_{2}^{0}, y_{3}^{0})= (S_{0}, I_{0}, R_{0})$, 
$\lambda=(\lambda_{1}, \lambda_{2}, \lambda_{3})$, 
$\mathrm{L}^{2}(\Omega)= (L^{2}(\Omega))^{3}$
and $A$ be the linear diffusion operator defined by
$$
A: D(A) \subset \mathrm{L}^{2}(\Omega) 
\rightarrow \mathrm{L}^{2}(\Omega)
$$
$$
Ay = \lambda\Delta y = ( \lambda_{1} \Delta y_{1}, \lambda_{2} 
\Delta y_{2}, \lambda_{3} \Delta y_{3}), 
\quad \forall y \in D(A),
$$
where 
$$
D(A)= \left  \{  y=(y_{1}, y_{2}, y_{3}) \in (H^{2}(\Omega))^{3}, 
\frac{\partial y_{1}}{\partial \nu}
= \frac{\partial y_{2}}{\partial \nu}
= \frac{\partial y_{3}}{\partial \nu}=0,
\quad \mbox{a.e. } x \in \partial \Omega  \right \}.
$$
We also set
$$
f(y(t))= (f_{1}(y(t)), f_{2}(y(t)), f_{3}(y(t)))
$$
with
\[
\begin{cases}
f_{1}(y(t)) = & \mu (y_{1}+ y_{2}+ y_{3}) 
- \beta y_{1}y_{2}-d y_{1}-u y_{1},\\
f_{2}(y(t)) = & \beta y_{1} y_{2} -(d+r) y_{2},  
\quad t \in [0, T], \\
f_{3}(y(t)) = & r y_{2}- d y_{3} +u y_{1}.
\end{cases}
\]
The problem can be rewritten in a compact form as
\[
\begin{cases}
^{abc}_{0}D_{t}^{\alpha}y = A y + f(y(t)),\\
y(0)= y^{0},
\end{cases}
\]
where $^{abc}_{0}D_{t}^{\alpha}$ is the Atangana--Baleanu
fractional derivative of order $\alpha \in (0, 1)$ 
in the sense of Caputo with respect to time $t$. The symbol
$\triangle$ denotes the Laplacian
with respect to the spacial variables, defined on
$H^{2}(\Omega)\bigcap H_{0}^{1}(\Omega)$.

% ------------------------------------------------------

\section{Existence of solution}
\label{sec4}

Existence of solution is proved in the weak sense.

\begin{definition}
We say that $y$ is a weak solution to \eqref{eq1} if
\begin{equation}
\label{equa1}
\int_{\Omega} \, (^{abc}_{0}D_{t}^{\alpha} y) v dx
+ \lambda \int_{\Omega} \nabla y \nabla v dx
= \int_{\Omega} f(y) v dx
\end{equation}
for all $v \in H^{1}(\Omega)$.
\end{definition}

Integrating by parts, involving the ABC fractional-time derivative
(see \cite{djida}), and using a straightforward calculation, one
obtains the following result.

\begin{proposition}
\label{prop4}
Let $y, v \in {C^{\infty}}(\overline{Q_{T}})$. Then,
\begin{multline}
\label{eq8}
\int_{0}^{T} \, ^{abc}_{0}D_{t}^{\alpha}y \cdot v dt
=- \int_{0}^{T} \, ^{abc}_{T}D_{t}^{\alpha}v \cdot y dt
+ \frac{B(\alpha)}{1-\alpha} v(x, T)
\int_{0}^{T} y E_{\alpha, \alpha}[-\gamma (T-t)^{\alpha}] dt\\
- \frac{B(\alpha)}{1-\alpha} y(x, 0)
\int_{0}^{T} E_{\alpha, \alpha}[-\gamma t^{\alpha}] v dt.
\end{multline}
\end{proposition}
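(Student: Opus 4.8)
The plan is to establish the integration-by-parts formula \eqref{eq8} by starting from the definition \eqref{eq3bis} of $^{abc}_{0}D_{t}^{\alpha}y$, substituting it into the left-hand side, and then interchanging the order of integration via Fubini's theorem (justified since $y,v\in C^{\infty}(\overline{Q_T})$ and the Mittag--Leffler kernel is continuous, hence everything is bounded on the compact domain $[0,T]$). Writing
\[
\int_{0}^{T} {}^{abc}_{0}D_{t}^{\alpha}y(t)\, v(t)\, dt
= \frac{B(\alpha)}{1-\alpha}\int_{0}^{T}\!\!\int_{0}^{t} y'(\tau)\,
E_{\alpha}\!\left[-\gamma(t-\tau)^{\alpha}\right] v(t)\, d\tau\, dt,
\]
I would swap the integrals to get $\int_0^T y'(\tau)\left(\int_\tau^T E_\alpha[-\gamma(t-\tau)^\alpha]\,v(t)\,dt\right)d\tau$, and then integrate by parts in the $\tau$ variable, moving the derivative off $y'$ and onto the inner integral. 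The boundary terms at $\tau=0$ and $\tau=T$ produce exactly the two rank-one correction terms in \eqref{eq8}, where the $\tau=T$ endpoint gives a term with $v(x,T)$ (note the inner integral collapses to a single kernel when $\tau\to T$, but more carefully the surviving structure after the endpoint evaluation yields the $E_{\alpha,\alpha}$ weighted integrals as claimed).

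The genuinely delicate step is differentiating the inner integral $G(\tau):=\int_{\tau}^{T} E_{\alpha}[-\gamma(t-\tau)^{\alpha}]\,v(t)\,dt$ with respect to $\tau$. Here one must use the Leibniz rule for variable limits together with the known derivative identity for the Mittag--Leffler kernel, namely that $\frac{d}{d\tau}E_{\alpha}[-\gamma(t-\tau)^{\alpha}]$ reproduces (up to the factor $-\gamma$ and a shift in the second parameter) a $E_{\alpha,\alpha}$-type kernel — this is precisely the computation behind the backward ABC derivative \eqref{eq3'bis} in Definition~\ref{def:1.1'}. Tracking this identity carefully is what converts the $-\partial_\tau G(\tau)$ contribution into $-\int_0^T {}^{abc}_T D_t^\alpha v\cdot y\,dt$, while the explicit $E_{\alpha,\alpha}$ factors appearing in \eqref{eq8} are the residue of the lower-limit endpoint $t=\tau$ in the Leibniz rule, weighted against $v(x,T)$ and $y(x,0)$ respectively.

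To organize the argument I would (i) recall the elementary differentiation formula $\frac{d}{ds}E_{\alpha}(-\gamma s^{\alpha}) = -\gamma\, s^{\alpha-1} E_{\alpha,\alpha}(-\gamma s^{\alpha})$ for $s>0$, which follows by termwise differentiation of the Mittag--Leffler series (legitimate since it is entire); (ii) apply Fubini to rewrite the left-hand side as a $\tau$-integral of $y'(\tau)G(\tau)$; (iii) integrate by parts in $\tau$, producing the boundary contribution $\big[y(\tau)G(\tau)\big]_0^T$ and the bulk term $-\int_0^T y(\tau)G'(\tau)\,d\tau$; (iv) compute $G'(\tau)$ by the Leibniz rule, splitting into the endpoint term $-E_{\alpha}(0)v(\tau)=-v(\tau)$ and the integral term involving the kernel derivative from step (i); (v) recognize the integral term, after reinserting the constant $\frac{B(\alpha)}{1-\alpha}$, as exactly $-{}^{abc}_{T}D_{t}^{\alpha}v(\tau)$ per \eqref{eq3'bis}; and (vi) collect the boundary pieces — with $G(T)=0$ handled through the limiting form and $G(0)=\int_0^T E_\alpha[-\gamma t^\alpha]v\,dt$ — to match the stated right-hand side. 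The bookkeeping of which $E_{\alpha}$ versus $E_{\alpha,\alpha}$ appears in each term, and getting the signs right on the two rank-one corrections, is the main place where care is required; everything else is routine once the kernel differentiation identity is in hand.
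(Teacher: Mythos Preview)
Your overall strategy---Fubini followed by integration by parts in $\tau$, using the kernel differentiation identity $\frac{d}{ds}E_\alpha(-\gamma s^\alpha)=-\gamma s^{\alpha-1}E_{\alpha,\alpha}(-\gamma s^\alpha)$---is exactly the ``straightforward calculation'' the paper invokes (the paper itself gives no details beyond citing \cite{djida}), so the approach matches.

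There is, however, a genuine gap at your step~(v). After the Leibniz rule, the integral piece of $G'(\tau)$ is
\[
\gamma\int_\tau^T (t-\tau)^{\alpha-1}E_{\alpha,\alpha}\bigl[-\gamma(t-\tau)^\alpha\bigr]\,v(t)\,dt,
\]
which carries $v$ (not $v'$) under an $E_{\alpha,\alpha}$ kernel; this is \emph{not} the backward ABC derivative \eqref{eq3'bis}, whose definition has $v'$ under an $E_\alpha$ kernel. To convert one into the other you must perform a \emph{second} integration by parts, now in the variable~$t$, and it is this second step---not the endpoint $\tau=T$ of your first integration by parts, where $G(T)=0$ contributes nothing---that produces the boundary term carrying $v(x,T)$. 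Your sentence ``the inner integral collapses\dots the surviving structure after the endpoint evaluation yields the $E_{\alpha,\alpha}$ weighted integrals'' glosses over precisely this missing step. Once the second integration by parts is done, the endpoint term $-v(\tau)$ from the Leibniz rule cancels against a $+v(\tau)$ generated there, and the backward derivative emerges cleanly. (A careful execution also shows that the two boundary integrals naturally carry $E_\alpha$ rather than $E_{\alpha,\alpha}$, so be prepared to reconcile that with the formula as stated.)
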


Using the boundary conditions \eqref{eq2},
we immediately get the following Corollary.

\begin{corollary}
\label{cor5}
Let $y, v \in {C^{\infty}}(\overline{Q_{T}})$. Then,
\begin{equation*}
\begin{split}
\int_{\Omega}&\int_{0}^{T}
\left (  ^{abc}_{0}D_{t}^{\alpha}y  - \lambda \triangle y \right ) v dx dt\\
& =  - \frac{B(\alpha)}{1-\alpha} \int_{\Omega} \int_{0}^{T} y(x, 0)
E_{\alpha, \alpha}[-\gamma t^{\alpha}] + \int_{0}^{T}\int_{\Omega}
y \left ( - ^{abc}_{T}D_{t}^{\alpha} v - \lambda \triangle v   \right ) dx dt\\
&\quad +\frac{B(\alpha)}{1-\alpha} \int_{\Omega} v(x, T)
\int_{0}^{T}  y E_{\alpha, \alpha}[-\gamma (T-t)^{\alpha}] dt dx.
\end{split}
\end{equation*}
\end{corollary}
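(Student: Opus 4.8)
The plan is to deduce the Corollary from Proposition~\ref{prop4} by integrating the time-identity \eqref{eq8} over $\Omega$ and then treating the diffusion term separately by a spatial integration by parts. First I would note that \eqref{eq8} holds for each fixed $x\in\Omega$; integrating both sides against $dx$ over $\Omega$ and using Fubini's theorem (legitimate since $y$ and $v$ are smooth on the compact set $\overline{Q_T}$, so all integrands are bounded and the iterated integrals may be freely reordered) already produces three of the terms on the right-hand side of the Corollary, namely $\int_0^T\!\int_\Omega y\,\bigl(-{}^{abc}_{T}D_{t}^{\alpha}v\bigr)\,dx\,dt$, the boundary-in-time term $-\frac{B(\alpha)}{1-\alpha}\int_\Omega\int_0^T y(x,0)E_{\alpha,\alpha}[-\gamma t^{\alpha}]\,v\,dt\,dx$, and $\frac{B(\alpha)}{1-\alpha}\int_\Omega v(x,T)\int_0^T y\,E_{\alpha,\alpha}[-\gamma (T-t)^{\alpha}]\,dt\,dx$.

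Next I would handle the diffusion contribution. Green's first identity on $\Omega$ applied to the smooth functions $y(t,\cdot)$ and $v(t,\cdot)$ gives
\begin{equation*}
\int_{\Omega}\lambda\,\triangle y\, v\, dx
= -\int_{\Omega}\lambda\,\nabla y\cdot\nabla v\, dx
+ \int_{\partial\Omega}\lambda\,\frac{\partial y}{\partial\nu}\, v\, d\sigma ,
\end{equation*}
and the boundary integral vanishes by the homogeneous Neumann conditions \eqref{eq2}. Integrating the remaining symmetric term $-\int_{\Omega}\lambda\,\nabla y\cdot\nabla v\,dx$ by parts once more and discarding the boundary integral — which is also zero once the test/adjoint function $v$ is taken to satisfy $\partial v/\partial\nu=0$ on $\partial\Omega$ — yields $\int_{\Omega}\lambda\,\triangle y\, v\, dx=\int_{\Omega}\lambda\,\triangle v\, y\, dx$, i.e. the self-adjointness of $\lambda\triangle$ under Neumann conditions. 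Integrating this in $t$ over $[0,T]$ supplies, up to sign, the term $\int_0^T\!\int_\Omega y\,(-\lambda\triangle v)\,dx\,dt$.

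Finally I would assemble the pieces: subtracting $\lambda\int_\Omega\int_0^T\triangle y\, v\,dx\,dt$ from the integrated form of \eqref{eq8} reconstitutes $\int_\Omega\int_0^T\bigl({}^{abc}_{0}D_{t}^{\alpha}y-\lambda\triangle y\bigr)v\,dx\,dt$ on the left, while on the right the ${}^{abc}_{T}D_{t}^{\alpha}v$ term and the $\lambda\triangle v$ term merge into the single factor $\bigl(-{}^{abc}_{T}D_{t}^{\alpha}v-\lambda\triangle v\bigr)$, giving exactly the claimed identity. The only step that is not purely mechanical — and indeed the entire substance of the phrase ``using the boundary conditions \eqref{eq2}'' — is the vanishing of the two boundary integrals in the spatial integration by parts; this is the reason the Neumann condition must also be imposed on $v$, a convention consistent with the adjoint system derived in Section~\ref{sec6}. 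Everything else (Fubini, differentiation under the integral sign, the two applications of Green's identity) is immediate from the smoothness of $y$ and $v$ on $\overline{Q_T}$.
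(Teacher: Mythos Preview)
Your proposal is correct and follows exactly the route the paper indicates: the paper's entire ``proof'' is the single sentence ``Using the boundary conditions \eqref{eq2}, we immediately get the following Corollary,'' and what you have written is precisely the unpacking of that sentence---integrate \eqref{eq8} over $\Omega$ via Fubini, then move the Laplacian from $y$ to $v$ by Green's second identity with the Neumann conditions killing the boundary terms. You even flag a point the paper glosses over, namely that the second boundary integral requires $\partial v/\partial\nu=0$ as well; this is not part of the hypotheses of the Corollary as stated but is satisfied by the adjoint variable $p$ in Section~\ref{sec6}, which is the only place the identity is used.
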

We proceed similarly as in \cite{djida}. Let $V_{m}$ define a subspace 
of $H^{1}(\Omega)$ generated by the $w_{1}$, $w_{2}$, $\ldots$, $w_{m}$ 
space vectors of orthogonal eigenfunctions of the operator $\Delta$.
We seek $u_{m}: t \in (0,  T] \rightarrow u_{m}(t) \in V_{m}$,
solution of the fractional differential equation
\begin{equation*}
\begin{cases}
\displaystyle \int_{\Omega} \, ^{abc}_{0}D_{t}^{\alpha} y_{m} v dx
+ \int_{\Omega} \nabla y_{m} \nabla v dx
= (f(y_{m}), v) & \mbox{ for all } v \in V_{m}, \\
y_{m}(x, 0)=y_{0m} & \mbox{ for } x \in \Omega.
\end{cases}
\end{equation*}

To continue the proof of existence, we recall the following auxiliary result.

\begin{theorem}[See \cite{djida}]
\label{thm6}
Let $\alpha \in (0, 1)$. Assume that $f \in L^{2}(Q_{T})$,
$y_{0} \in L^{2}(\Omega)$. Let $(\cdot, \cdot)$ be the
scalar product in $L^{2}(\Omega)$ and $a(\cdot, \cdot)$
be the bilinear form in $H^{1}_{0}(\Omega)$ defined by
$$
a(\phi, \psi)= \int_{\Omega} \nabla \phi(x) \nabla \psi(x) dx
\quad \forall \phi, \psi \in H^{1}(\Omega).
$$
Then the problem
\begin{equation*}
\begin{cases}
\left(^{abc}_{0}D_{t}^{\alpha}y, v\right) + a(y(t), v)
= (f(t), v), & \mbox{ for all } t \in (0, T), \\
y(x, 0)=y_{0}, & \mbox{ for } x \in \Omega,
\end{cases}
\end{equation*}
has a unique solution $y \in L^{2}(0, T, H_{0}^{1}(\Omega))
\bigcap {C}(0, T, L^{2}(\Omega))$ given by
\begin{multline}
\label{eq13}
y(x,t)=
\sum_{j=1}^{+\infty}  \bigg( \zeta_{j} E_{\alpha}[-\gamma_{j}
t^{\alpha}]y^{0}_{j}+\frac{(1-\alpha)\zeta_{j}}{B(\alpha)} f_{j}(t)\\
+  K_{j}\int_{0}^{t}(t-s)^{\alpha-1} E_{\alpha, \alpha}\left[
-\gamma_{j}(t-s)^{\alpha}\right] f_{j}(s) ds  \bigg) w_{j},
\end{multline}
where $\gamma_{j}$ and $\zeta_{j}$ are constants.
Moreover, provided $y_{0} \in L^{2}(\Omega)$,
$y$ satisfies the inequalities
\begin{equation}
\label{eq14}
\|y\|_{ L^{2}(0, T, H_{0}^{1}(\Omega))}
\leq \mu_{1} ( \|y_{0}\|_{H_{0}^{1}(\Omega)} +\|f\|_{L^{2}(Q_{T})})
\end{equation}
and
\begin{equation}
\label{eq15}
\|y\|_{ L^{2}(\Omega))} \leq \mu_{2} ( \|y_{0}\|_{L^{2}(\Omega)} +\|f\|_{L^{2}(Q_{T})}),
\end{equation}
where $\mu_{1}$ and $\mu_{2}$ are positive constants.
\end{theorem}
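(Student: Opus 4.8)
The plan is to prove the theorem by the spectral (Fourier--Galerkin) method: reduce the abstract fractional evolution equation to a decoupled family of scalar ABC fractional differential equations, solve each one explicitly by the Laplace transform, and then reassemble and estimate the resulting series.

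First I would fix the $L^{2}(\Omega)$-orthonormal basis $\{w_{j}\}_{j\geq 1}$ of eigenfunctions of $-\Delta$ on $H_{0}^{1}(\Omega)$, with eigenvalues $0<\lambda_{1}\leq\lambda_{2}\leq\cdots\to\infty$, so that $a(w_{j},v)=\lambda_{j}(w_{j},v)$ for every $v\in H_{0}^{1}(\Omega)$. Writing $y(x,t)=\sum_{j}c_{j}(t)w_{j}(x)$, $f(x,t)=\sum_{j}f_{j}(t)w_{j}(x)$ and $y_{0}=\sum_{j}y^{0}_{j}w_{j}$, and testing the weak equation against $v=w_{j}$, the spatial integration commutes with the time-only operator $^{abc}_{0}D_{t}^{\alpha}$, so each coefficient has to solve the scalar Cauchy problem
\begin{equation*}
^{abc}_{0}D_{t}^{\alpha}c_{j}(t)+\lambda_{j}c_{j}(t)=f_{j}(t),
\qquad c_{j}(0)=y^{0}_{j}.
\end{equation*}

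Next I would solve this scalar problem. Taking Laplace transforms and using that the symbol of $^{abc}_{0}D_{t}^{\alpha}$ is $\frac{B(\alpha)}{1-\alpha}\cdot\frac{s^{\alpha}}{s^{\alpha}+\gamma}$ (with the corresponding initial-value term), one obtains $\widehat{c_{j}}(s)$ as a rational expression in $s^{\alpha}$; decomposing it and inverting term by term via the classical identities $\mathcal{L}^{-1}\!\big[\tfrac{s^{\alpha-1}}{s^{\alpha}+\mu}\big]=E_{\alpha}(-\mu t^{\alpha})$ and $\mathcal{L}^{-1}\!\big[\tfrac{1}{s^{\alpha}+\mu}\big]=t^{\alpha-1}E_{\alpha,\alpha}(-\mu t^{\alpha})$ produces exactly the three-term formula \eqref{eq13}, with $\gamma_{j}=\frac{\lambda_{j}\gamma}{\frac{B(\alpha)}{1-\alpha}+\lambda_{j}}$, $\zeta_{j}=\frac{B(\alpha)}{(1-\alpha)(\frac{B(\alpha)}{1-\alpha}+\lambda_{j})}$ and $K_{j}=\frac{\gamma B(\alpha)}{(1-\alpha)(\frac{B(\alpha)}{1-\alpha}+\lambda_{j})^{2}}$. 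Since the Laplace transform determines $c_{j}$, this already gives uniqueness: projecting the difference of two solutions of the homogeneous problem onto each $w_{j}$ forces all $c_{j}\equiv 0$ (equivalently, one may test against $v=c_{j}$ and invoke the fractional energy inequality for the ABC derivative).

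Finally I would prove convergence of the series and the estimates \eqref{eq14}--\eqref{eq15}. The key inputs are the bounds $0\leq E_{\alpha}(-x),E_{\alpha,\alpha}(-x)\leq C$ for $x\geq 0$ (complete monotonicity for $\alpha\in(0,1)$), together with $\zeta_{j}\leq 1$, $\zeta_{j}=O(\lambda_{j}^{-1})$, $K_{j}=O(\lambda_{j}^{-2})$ and $0<\gamma_{j}<\gamma$; in particular $\int_{0}^{T}t^{\alpha-1}|E_{\alpha,\alpha}(-\gamma_{j}t^{\alpha})|\,dt\leq CT^{\alpha}/\alpha$ uniformly in $j$. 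Estimating in \eqref{eq13} the initial-data term directly, the $f_{j}$ term through $\frac{(1-\alpha)\zeta_{j}}{B(\alpha)}=\frac{1}{\frac{B(\alpha)}{1-\alpha}+\lambda_{j}}\leq\frac{1-\alpha}{B(\alpha)}$, and the convolution term, written $K_{j}(k_{j}*f_{j})$ with $k_{j}(t)=t^{\alpha-1}E_{\alpha,\alpha}(-\gamma_{j}t^{\alpha})$, by Young's inequality $\|k_{j}*f_{j}\|_{L^{2}(0,T)}\leq\|k_{j}\|_{L^{1}(0,T)}\|f_{j}\|_{L^{2}(0,T)}$, one gets $\sum_{j}\lambda_{j}\|c_{j}\|_{L^{2}(0,T)}^{2}\leq C(\|y_{0}\|_{H_{0}^{1}(\Omega)}^{2}+\|f\|_{L^{2}(Q_{T})}^{2})$ and the analogous $\sup_{t}\sum_{j}|c_{j}(t)|^{2}$ bound. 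These yield \eqref{eq14} and \eqref{eq15}, the convergence of the truncations $\sum_{j\leq N}c_{j}(t)w_{j}$ in $L^{2}(0,T,H_{0}^{1}(\Omega))\cap C(0,T,L^{2}(\Omega))$, and, after passing to the limit in the Galerkin relations, the fact that the limit is the weak solution. I expect the main obstacle to be precisely this last stage: because the non-singular Mittag--Leffler kernel keeps $\gamma_{j}$ bounded, the high modes receive no extra temporal decay, so all the parabolic smoothing must be extracted from the algebraic decay of $\zeta_{j}$ and $K_{j}$ in $\lambda_{j}$, and these bounds have to be tracked carefully, uniformly in $j$ and $t$, in order to justify the term-by-term manipulations and the membership $^{abc}_{0}D_{t}^{\alpha}y\in L^{2}(Q_{T})$.
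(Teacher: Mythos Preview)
The paper does not supply its own proof of this theorem: it is stated with the attribution ``See \cite{djida}'' and is used as a black-box tool (a priori estimates and explicit representation) in the subsequent existence arguments. There is therefore no in-paper proof to compare your attempt against.

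That said, your proposal is precisely the strategy carried out in the cited reference \cite{djida}: spectral decomposition along the $L^{2}(\Omega)$-orthonormal eigenbasis of $-\Delta$, reduction to decoupled scalar ABC fractional ODEs for the Fourier coefficients, explicit solution of each scalar problem by the Laplace transform using the standard Mittag--Leffler inversion formulas, and then reassembly of the series with estimates driven by the complete monotonicity of $E_{\alpha}$, $E_{\alpha,\alpha}$ and the algebraic decay of $\zeta_{j}$, $K_{j}$ in $\lambda_{j}$. Your identification of the constants $\gamma_{j}$, $\zeta_{j}$, $K_{j}$ and your use of Young's inequality for the convolution term are the correct ingredients, and your remark that the parabolic smoothing must come entirely from the decay of $\zeta_{j}$, $K_{j}$ (since $\gamma_{j}$ stays bounded) is exactly the delicate point in the non-singular-kernel setting. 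In short, your sketch is sound and matches the approach of the source \cite{djida} that the present paper invokes.
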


Since $f(y_{m}) \in L^{2}(Q_{T})$, Theorem~\ref{thm6} implies that $y_{m}$ 
is given in a explicit form. The existence of a solution is obtained by using the
a priori estimate of Theorem~\ref{thm6} and the same arguments used to pass 
to the limit as those used by us below in the proof of Theorem~\ref{thm:3.1}.

% ----------------------------------------------

\section{Existence of an optimal control}
\label{sec5}

We prove existence of an optimal control by using
minimizing sequences.

\begin{theorem}
\label{thm:3.1}
There exists at least an optimal solution
$y^*(u^*) \in L^{\infty}(Q_{T})$
satisfying \eqref{eq1}--\eqref{eq3} and minimizing \eqref{eq4}.
\end{theorem}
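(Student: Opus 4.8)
The plan is to use the direct method of the calculus of variations via a minimizing sequence. First I would observe that the objective functional $J$ in \eqref{eq4} is bounded below by zero, so the infimum $m = \inf\{J(y,u) : u \in U_{ad},\ y = y(u) \text{ solves } \eqref{eq1}-\eqref{eq3}\}$ is finite and nonnegative, and I would pick a minimizing sequence $(y^n, u^n)$ with $y^n = y(u^n)$ the corresponding weak solution (whose existence and uniqueness is guaranteed by Section~\ref{sec4}) such that $J(y^n, u^n) \to m$. Since $U_{ad} \subset L^{\infty}(Q_T)$ is bounded (by the constraint $\|u\|_{L^{\infty}} < 1$), the sequence $u^n$ is bounded in $L^2(Q_T)$, hence, up to a subsequence, $u^n \rightharpoonup u^*$ weakly in $L^2(Q_T)$. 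One must check $u^* \in U_{ad}$: positivity $u^* \geq 0$ and the bound $\|u^*\|_{L^{\infty}} \le 1$ pass to the limit because both $\{u \geq 0\}$ and the $L^{\infty}$ ball are closed and convex, hence weakly closed (strictly speaking $U_{ad}$ uses strict inequality, so I would either work with its closure or note that the minimizer can be shown to satisfy the strict bound a posteriori from the characterization in Section~\ref{sec6}).

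Next I would extract uniform bounds on the state sequence $y^n = (S^n, I^n, R^n)$. Using the a priori estimates \eqref{eq14}--\eqref{eq15} of Theorem~\ref{thm6} together with the bound on $u^n$ and a bootstrap/truncation argument to control the nonlinear term $f(y^n)$ (the quadratic term $\beta S^n I^n$ requires care, see below), I would obtain that $y^n$ is bounded in $L^2(0,T,(H^1(\Omega))^3)$ and, via the regularity of the ABC evolution problem, that $^{abc}_0 D_t^{\alpha} y^n$ is bounded in a suitable dual space; an Aubin--Lions-type compactness argument adapted to the fractional setting (as used in \cite{djida}) then yields, up to a subsequence, $y^n \rightharpoonup y^*$ weakly in $L^2(0,T,(H^1(\Omega))^3)$ and $y^n \to y^*$ strongly in $L^2(Q_T)^3$, with also pointwise a.e.\ convergence. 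The strong $L^2$ convergence is exactly what is needed to pass to the limit in the nonlinear reaction term: $f(y^n) + $ the control-coupling terms $-u^n S^n$, $+u^n S^n$ converge in the sense of distributions to $f(y^*)$ together with $-u^* S^*$, $+u^* S^*$, using strong convergence of $S^n$ against weak convergence of $u^n$ for the bilinear control term.

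Passing to the limit in the weak formulation \eqref{equa1} (tested against smooth functions and using Proposition~\ref{prop4} / Corollary~\ref{cor5} to handle the fractional time derivative in the weak sense) then shows that $(y^*, u^*)$ satisfies \eqref{eq1}--\eqref{eq3}, i.e.\ $y^* = y(u^*)$. Finally, lower semicontinuity of $J$: the first two terms $\|I^n\|_{L^2(Q_T)}^2$ and $\|I^n(T,\cdot)\|_{L^2(\Omega)}^2$ are continuous (in fact convergent) under the strong $L^2$ convergence of $I^n$ and the trace convergence at $t=T$ (which follows from $I^n \in C(0,T,L^2(\Omega))$ with uniform bounds), while the control cost $\theta\|u^n\|_{L^2(Q_T)}^2$ is weakly lower semicontinuous because the $L^2$-norm is. Hence $J(y^*, u^*) \le \liminf_n J(y^n, u^n) = m$, and since $(y^*,u^*)$ is admissible, $J(y^*,u^*) = m$, so $(y^*, u^*)$ is an optimal pair; the $L^{\infty}(Q_T)$ regularity of $y^*$ is obtained from the boundedness of the data and a maximum-principle / $L^p$-$L^{\infty}$ bootstrap for the ABC parabolic system.

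The main obstacle I expect is the passage to the limit in the quadratic nonlinearity $\beta S^n I^n$ and the control-coupled term $u^n S^n$: this genuinely requires strong (not just weak) convergence of the state, so the crux of the argument is establishing the fractional-in-time compactness (an Aubin--Lions lemma valid for the ABC derivative, where the Mittag--Leffler kernel replaces the usual convolution structure) together with enough a priori regularity to absorb the nonlinear terms into $L^2(Q_T)$ in the first place — a mild circularity that is resolved by a truncation argument on $f$, solving the truncated problem, estimating uniformly, and then removing the truncation. A secondary technical point is justifying that the weak limit $u^*$ lies in the admissible set with the strict $L^\infty$ bound, which is cleanest to defer to the characterization of the optimal control derived in Section~\ref{sec6}.
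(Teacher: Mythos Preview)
Your proposal is correct and follows essentially the same direct-method strategy as the paper: minimizing sequence, a priori bounds from Theorem~\ref{thm6}, Aubin-type compactness to upgrade weak to strong $L^2$ convergence of the states, passage to the limit in the weak formulation, and weak lower semicontinuity of $J$. The only places where the paper is more concrete are (i) it bypasses your truncation step by invoking the biological bound $|y_i^n|\le N$ directly to place $f(y^n)$ in $L^\infty(Q_T)$, and (ii) it makes the fractional-in-time compactness step explicit via a short lemma (Lemma~\ref{lm52}) that bounds $\|\partial_t y^n\|_{L^1(0,T,L^1(\Omega))}$ by $c\,E_\alpha(-\gamma T^\alpha)^{-1}\|y^n\|_{L^\infty(0,T,L^2(\Omega))}$ using the complete monotonicity of $E_\alpha$, after which the classical Aubin embedding $W\hookrightarrow\hookrightarrow L^2(Q_T)$ applies.
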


\begin{proof}
Let $(y^{n}, u^{n})$ be a minimizing sequence of
$J(y, u)$ such that
$$
\lim_{n \rightarrow +\infty} J(y^{n}, u^{n})=J(y^*, u^*)
= \inf J(y, u)
$$
with $u^{n}, u \in U_{ad}$ and $y^{n}= (y_{1}^{n}, y_{2}^{n}, y_{3}^{n})$ 
satisfying the corresponding system to \eqref{eq1}
\begin{equation*}
\begin{split}
^{abc}_{0}D_{t}^{\alpha}y^{n}- \lambda \triangle y^{n}
&=  f(y^{n})\, ,
\mbox{ in } Q_{T}= \Omega \times (0, T), \\
\frac{\partial y^{n}}{\partial\nu}
&= 0 \, ,  \mbox{ on } \Sigma_{T}
=\partial \Omega \times (0, T), \\
y^{n}(0, x)&= y^{0} \, ,  \mbox{ in } \Omega.
\end{split}
\end{equation*}
By Theorem~\ref{thm6}, we know that $(y_{n})$ is bounded,
independently of $n$, in $L^{2}(0, T, H^{1}(\Omega))$ 
and satisfying the inequalities
\begin{equation}
\label{eq17}
\|y^{n}\|_{ L^{2}(0, T, H_{0}^{1}(\Omega))}
\leq \mu_{1} ( \|y^{0}\|_{H_{0}^{1}(\Omega)} +\|f\|_{L^{2}(Q_{T})})
\end{equation}
and
\begin{equation}
\label{eq18}
\|y^{n}\|_{ L^{2}(\Omega))} 
\leq \mu_{2} ( \|y^{0}\|_{L^{2}(\Omega)} +\|f\|_{L^{2}(Q_{T})}),
\end{equation}
where $\mu_{1}$ and $\mu_{2}$ are positive constants. Then $(y^{n})$ 
is bounded in $L^{\infty}(0, T, L^{2}(\Omega))$ and $ L^{2}(0, T, H_{0}^{1}(\Omega))$.
By using the boundedness of $y_{i}^{n}$ ($|y_{i}|\leq N$, for $i=1,2,3$), 
the second member $f$ is in $L^{\infty}(Q_{T})$. Then, we have,
for a positive constant independent of $n$, that
\begin{equation*}
\| ^{abc}_{0}D_{t}^{\alpha}y^{n}
- \lambda \triangle y^{n} \|_{L^{2}(Q_{T})} \leq c.
\end{equation*}
Therefore, there exists  a subsequence of $y^{n}$,
still denoted by $(y^{n})$, and
$u^{n} \in U_{ad}$ such that
\begin{equation*}
\begin{gathered}
^{abc}_{0}D_{t}^{\alpha}y^{n}- \lambda \triangle y^{n}
\rightharpoonup \delta \mbox{ weakly in } L^{2}(Q_{T}),\\
y^{n} \rightharpoonup y^{*} \mbox{ weakly in } L^{2}(0, T,
H_{0}^{1}(\Omega)).
\end{gathered}
\end{equation*}
We now show that $\frac{\partial y^{n}}{\partial t}$ is bounded 
in $L^{1}(0, T, H^{-1}(\Omega))$.  We shall use the following lemma.

\begin{lemma}
\label{lm52}
If $u \in L^{\infty}(0, T, L^{2}(\Omega)) \cap H^{1}(0, T, L^{1}(\Omega))$, 
then there exists a positive constant $c$ such that
$$
\| \partial_{t}u \|_{L^{1}(0, T, L^{1}(\Omega))}
\leq \frac{c}{E_{\alpha}(-\gamma T^{\alpha})} 
\|u \|_{L^{\infty}(0, T, L^{2}(\Omega))}.
$$
\end{lemma}

\begin{proof}
Since for $0 \leq s \leq t \leq T$, $t \rightarrow E_{\alpha}(-t) $ 
is completely monotonic, we have
$$
E_{\alpha}(-\gamma T^{\alpha}) \leq E_{\alpha}(-\gamma (t-s)^{\alpha}).
$$
It yields that
$$
E_{\alpha}(-\gamma T^{\alpha}) \int_{0}^{t} |\partial_{s}u| ds 
\leq \int_{0}^{t} |\partial_{s}u|  E_{\alpha}(-\gamma (t-s)^{\alpha}) ds.
$$
Using the well-known inequality  $\| ^{abc}_{0}D{_t^{\alpha}} 
u \|_{L^{\infty}(0, T)} \leq \frac{B(\alpha)}{1-\alpha} 
\|u\|_{L^{\infty}(0, T)}$, we get
\begin{equation}
\begin{aligned}
E_{\alpha}(-\gamma T^{\alpha})\frac{B(\alpha)}{1-\alpha} 
\int_{0}^{t} |\partial_{s}u| ds 
& \leq \frac{B(\alpha)}{1-\alpha} \int_{0}^{t} |\partial_{s}u|  
E_{\alpha}(-\gamma (t-s)^{\alpha}) ds \\
& \leq \| ^{abc}_{0}D{_t^{\alpha}} u \|_{L^{\infty}(0, T)} 
\leq \frac{B(\alpha)}{1-\alpha} \|u\|_{L^{\infty}(0, T)}.
\end{aligned}
\end{equation}
It follows that
$$
\int_{0}^{T} |\partial_{s}u| ds 
\leq \frac{1}{E_{\alpha}(-\gamma T^{\alpha})} \|u\|_{L^{\infty}(0, T)}.
$$
Integrating over $\Omega$, we have
$$
\int_{0}^{T}\int_{\Omega} |\partial_{s}u| ds dx  
\leq \frac{1}{E_{\alpha}(-\gamma T^{\alpha})} 
\int_{\Omega}\|u\|_{L^{\infty}(0, T)}dx.
$$
Then, for a positive constant $c$, one has
\begin{equation}
\begin{aligned}
\| \partial_{t}u \|_{L^{1}(0, T, L^{1}(\Omega))} 
& \leq \frac{1}{E_{\alpha}(-\gamma T^{\alpha})} 
\|u \|_{L^{\infty}(0, T, L^{1}(\Omega))} \\
& \leq \frac{c}{E_{\alpha}(-\gamma T^{\alpha})} 
\|u \|_{L^{\infty}(0, T, L^{2}(\Omega))}.
\end{aligned}
\end{equation}
The proof is complete.
\end{proof}

By the estimate \eqref{eq18} of $y^{n}$ and Lemma~\ref{lm52}, 
we have that $\partial_{t} y^{n}$ is bounded in $L^{1}(0, T, L^{1}(\Omega))$.
Due to \eqref{eq17}, we have that $y^{n}$ is bounded 
in $L^{2}(0, T, H_{0}^{1}(\Omega))$. Set
$$
W = \{ v \in L^{2}(0, T, H_{0}^{1}(\Omega)), 
\partial_{t} v \in  L^{1}(0, T, L^{1}(\Omega)) \}.
$$
Using the classical argument of Aubin, the space $W$ is compactly 
embedded in $L^{2}(0, T, L^{2}(\Omega))= L^{2}(Q_{T})$. We can 
then extract a subsequence from $y^{n}$, not relabeled, such that
\begin{equation*}
\begin{gathered}
y^{n} \rightharpoonup y^{*} \mbox{ weakly in } L^{\infty}(0, T,
L^{2}(\Omega)) \mbox{ and in }  L^{2}(Q_{T}),\\
y^{n} \rightarrow y^{*} \mbox{ strongly in } L^{2}(Q_{T}),\\
y^{n} \rightarrow y^{*} \mbox{ a. e.  in }L^{2}(Q_{T}),\\
y^{n}(T) \rightarrow y^{*}(T) \mbox{ in } L^{2}(\Omega).
\end{gathered}
\end{equation*}
Denote $\mathbb{D}'(Q_{T})$ the dual of $\mathbb{D}(Q_{T})$,
the set of ${C}^{\infty}$ functions on $Q_{T}$ with compact support.
We claim that
$$
^{abc}_{0}D_{t}^{\alpha}y^{n}- \lambda \triangle y^{n}
\rightharpoonup  {^{abc}_{0}D_{t}^{\alpha}y^{*}}
- \lambda \triangle y^{*} \mbox{ weakly in } \mathbb{D}'(Q_{T}).
$$
Indeed, we have
$$
\int_{0}^{T} \int_{\Omega} y^{n}( ^{abc}_{0}D_{t}^{\alpha}v
- \lambda \triangle v ) dx dt
\rightarrow
\int_{0}^{T} \int_{\Omega} y^{*}( -^{abc}_{T}D_{t}^{\alpha}v -
\lambda  \triangle v ) dx dt, \forall v \in \mathbb{D}(Q_{T})
$$
and
$$
\int_{\Omega} v(x, T) \int_{0}^{T} y^{n}
E_{\alpha, \alpha}[-\gamma (T-t)^{\alpha}] dt dx
\rightarrow
\int_{\Omega} v(x, T) \int_{0}^{T} y^{*} E_{\alpha, \alpha}[
-\gamma (T-t)^{\alpha}] dt dx.
$$
On the other hand, the convergence 
$y_{i}^{n} \rightarrow y_{i}^{*}$ in $L^{2}(Q_{T})$
and the essential boundedness of $y_{1}^{n}$ and $y_{2}^{n}$ 
imply $y_{1}^{n} y_{2}^{n} \rightarrow y_{1}^{*}y_{2}^{*}$ 
in $L^{2}(Q_{T})$. Modulo a subsequence denoted $u^{n}$, we have
$$
u^{n} \rightharpoonup u^{*} \text{ weakly  in } L^{2}(Q_{T}).
$$
We deduce that $u^{*} \in U_{ad}$ as a consequence of the closure 
and the boundedness of this set in $L^{2}(Q_{T})$ 
and thus it is weakly closed. Similarly, we can prove that
$$
u^{n}y_{1}^{n}  \rightarrow u^{*}y_{1}^{*} \text{ in } L^{2}(Q_{T}).
$$
Therefore,
$$
^{abc}_{0}D_{t}^{\alpha}y^{n}- \lambda \triangle y^{n}
\rightarrow ^{abc}_{0}D_{t}^{\alpha}y^{*}
- \triangle y^{*} \mbox{ weakly in } \mathbb{D}'(Q_{T}).
$$
From the uniqueness of the limit, we have
$$
^{abc}_{0}D_{t}^{\alpha}y^{*} - \triangle y^{*} = \delta.
$$
By passing to the limit as $n \rightarrow \infty$ 
in the equation satisfied by $y^{n}$,
we deduce that $y^{*}$ is a solution
of \eqref{eq1}. Finally,
the lower semi-continuity of $J$ leads to
$J(y^{*}, u^{*})  = \inf J(y, u)$.
Therefore, $y^{*}(u^{*})$ is an optimal solution.
\end{proof}

% ----------------------------------------------

\section{Necessary optimality conditions}
\label{sec6}

In this section, our aim is to obtain optimality conditions.
As we shall see, our necessary optimality conditions involve
an adjoint system defined by means of the backward Atangana--Baleanu
fractional-time derivative.

Let $y^{*}(u^{*})$ be an optimal solution and $u^{\varepsilon}
= u^{*} + \varepsilon u \in U_{ad}$ be a control function such 
that $u \in U_{ad}$ and $\varepsilon >0$.
Denote $y^{\varepsilon}= (y_{1}^{\varepsilon}, 
y_{2}^{\varepsilon}, y_{3}^{\varepsilon})= (y_{1}, y_{2}, y_{3}) (u^{\varepsilon})$ 
and $y^{*}= (y_{1}^{*}, y_{2}^{*}, y_{3}^{*})= (y_{1}, y_{2}, y_{3}) (u^{*})$ 
the solutions of \eqref{eq1}--\eqref{eq3} corresponding to $u^{\varepsilon}$ 
and $u^{*}$, respectively. Setting $y^{\varepsilon}= y^{*} 
+ \varepsilon z^{\varepsilon}$ and subtracting the system corresponding to $y^{*}$ 
from the one corresponding to $y^{\varepsilon}$, we have
\begin{equation}
\label{eq16}
^{abc}_{0}D_{t}^{\alpha}\left(\frac{y^{\varepsilon}-y^{*}}{\varepsilon}\right)
- \lambda \triangle\left(\frac{y^{\varepsilon}-y^{*}}{\varepsilon}\right)
= \dfrac{f(y^{\varepsilon})-f(y^{*})}{\varepsilon}.
\end{equation}
System \eqref{eq16} can be rewritten as
\begin{equation*}
^{abc}_{0}D_{t}^{\alpha}z^{\varepsilon}
- \lambda \triangle  z^{\varepsilon}
= \dfrac{f(y^{\varepsilon})-f(y^{*})}{\varepsilon},
\end{equation*}
associated to Neumann boundary conditions
\begin{equation*}
\frac{\partial z_{1}^{\varepsilon}}{\partial \nu}
= \frac{\partial z_{2}^{\varepsilon}}{\partial \nu}
= \frac{\partial z_{3}^{\varepsilon}}{\partial \nu}=0 
\text{ on } \Sigma_{T}
\end{equation*}
and initial condition
$$
z^{\varepsilon}=0 \text{ in } \Omega \, ,
$$
where $z^{\varepsilon}= ( z_{1}^{\varepsilon}, 
z_{2}^{\varepsilon}, z_{3}^{\varepsilon})$ and
\[
\dfrac{f(y^{\varepsilon})-f(y^{*})}{\varepsilon}
=
\begin{cases}
\dfrac{f_{1}(y^{\varepsilon})- f_{1}(y^{*})}{{\varepsilon}}
= & (\mu- \beta y_{2}^{\varepsilon}-d - u^{\varepsilon})
z_{1}^{\varepsilon}-(\beta y_{1}^{*} + \mu) z_{2}^{\varepsilon}\\ 
&\quad + \mu z_{3}^{\varepsilon} - u y_{1}^{*},\\
\dfrac{f_{2}(y^{\varepsilon})- f_{2}(y^{*})}{{\varepsilon}}
=& \beta y_{2}^{\varepsilon} z_{1}^{\varepsilon} 
+ ( \beta y_{1}^{*} -d-r) z_{2}^{\varepsilon}, \\
\dfrac{f_{3}(y^{\varepsilon})- f_{3}(y^{*})}{{\varepsilon}} 
=& u^{\varepsilon} z_{1}^{\varepsilon}+ rz_{2}^{\varepsilon}
- dz_{3}^{\varepsilon}+u y_{1}^{*}.
\end{cases}
\]
Set
\[
F^{\varepsilon}=
\left(
\begin{array}{ccc}
\mu- \beta y_{2}^{\varepsilon}-d - u^{\varepsilon}
& -\beta y_{1}^{*}+ \mu  & \mu \\
\beta y_{2}^{\varepsilon} 
&  \beta y_{1}^{*} -d-r & 0 \\
u^{\varepsilon} & r & -d \\
\end{array}
\right)
\]
and
\[
G=
\left(
\begin{array}{c}
-y_{1}^{*} \\
0 \\
y_{1}^{*} \\
\end{array}
\right).
\]
Then, \eqref{eq16} can be reformulated in the following form:
\[
\begin{cases}
^{abc}_{0}D_{t}^{\alpha} z^{\varepsilon}
- \lambda \triangle  z^{\varepsilon}
&= F^{\varepsilon} z^{\varepsilon}+ Gu \text{ for } t\in[0, T],\\
z^{\varepsilon}(0)&= 0.
\end{cases}
\]
Since the elements of the matrix $F^{\varepsilon}$ are uniformly bounded 
with respect to $\varepsilon$ and $(-u y_{1}^{*}, 0, u y_{1}^{*})$ 
is bounded in $L^{2}(Q_{T})$, it follows by Theorem~\ref{thm6} 
that $z^{\varepsilon}= \frac{y^{\varepsilon}-y^{*}}{\varepsilon}$ 
is bounded in $L^{\infty}(0, T, L^{2}(\Omega)) 
\bigcap L^{2}(0, T, H^{1}(\Omega))$. Therefore, 
up to a subsequence of $z^{\varepsilon}$,
there exists $z$ such that as $\varepsilon$ 
tends to zero we have
\begin{equation}
\label{eq23}
\begin{gathered}
z^{\varepsilon} \rightarrow z \mbox{ weakly in } 
L^{\infty}(0, T, L^{2}(\Omega)) \mbox{ and in } L^{2}(0, T, H^{1}(\Omega)),\\
\frac{\partial z^{\varepsilon}}{\partial t}\rightarrow 
\frac{\partial z}{\partial t} \mbox{ weakly in } L^{2}(0, T, H^{-1}\Omega)).
\end{gathered}
\end{equation}
Put
\[
F=
\left(
\begin{array}{ccc}
\mu- \beta y_{2}^{*}-d - u^{*}& -\beta y_{1}^{*}+ \mu  & \mu \\
\beta y_{2}^{*} &  \beta y_{1}^{*} -d-r & 0 \\
u^{*} & r & -d \\
\end{array}
\right).
\]
Note that all the components of the matrix $F^{\varepsilon}$ 
tend to the corresponding ones of the matrix $F$ in $L^{2}(Q_{T})$ 
as $\varepsilon \rightarrow 0$. From equations satisfied by 
$y_{\varepsilon}$ and $y$, we have that
\begin{equation*}
\int_{\Omega}\int_{0}^{T} \, ^{abc}_{0}D_{t}^{\alpha}
z^{\varepsilon} v dx dt
+ \int_{\Omega}\int_{0}^{T} \lambda \nabla z^{\varepsilon}
\nabla v dx dt
= \int_{\Omega}\int_{0}^{T} \, 
\dfrac{(f(y^{\varepsilon})-f(y))}{\varepsilon} v dx dt.
\end{equation*}
Letting $\varepsilon \rightarrow 0$, we get
\begin{equation*}
\int_{\Omega}\int_{0}^{T} \, ^{abc}_{0}D_{t}^{\alpha}
z v dx dt
+ \int_{\Omega}\int_{0}^{T} \lambda \nabla z
\nabla v dx dt
= \int_{\Omega}\int_{0}^{T} \, (F z+ Gu) v dx dt
\end{equation*}
with $z(0)=0$.  By Green's formula, it follows that
\begin{multline*}
\int_{\Omega}\int_{0}^{T} \, ^{abc}_{0}D_{t}^{\alpha} z \cdot v dx dt
- \int_{\Omega}\int_{0}^{T} \lambda \Delta z \cdot v  dx dt
+ \int_{0}^{T}\int_{\partial \Omega} \frac{\partial z}{\partial \eta} v ds dt\\
=  \int_{\Omega}\int_{0}^{T} \, (F z+ Gu) v dx dt.
\end{multline*}
Then $z$ verifies
\begin{equation*}
\begin{gathered}
^{abc}_{0}D_{t}^{\alpha} z - \lambda \triangle z
= F z+ Gu, \mbox{ in } \Omega,  \\
\frac{\partial z}{\partial \nu}  = 0
\quad \mbox{ on } \partial \Omega,\\
z(0)=0.
\end{gathered}
\end{equation*}
To derive the adjoint operator associated with $z$, 
we need to introduce an enough smooth adjoint 
variable $p$ defined in $Q_{T}$. We have
\begin{equation*}
\int_{\Omega}\int_{0}^{T} \left(  ^{abc}_{0}D_{t}^{\alpha}z
- \lambda \triangle z \right) p dx dt
=\int_{Q_{T}} (F z+ Gu) p dx dt.
\end{equation*}
Integrating by parts, one has
\begin{equation*}
\begin{split}
\int_{\Omega}\int_{0}^{T} \, ^{abc}_{0}D_{t}^{\alpha}z. p dt dx
&= -  \int_{\Omega} \int_{0}^{T} \, ^{abc}_{T}D_{t}^{\alpha}p. z dt dx\\
&\quad + \frac{B(\alpha)}{1-\alpha} \int_{\Omega} p(x, T)
\int_{0}^{T} z E_{\alpha, \alpha}[-\gamma (T-t)^{\alpha}] dt.
\end{split}
\end{equation*}
We conclude that the adjoint function $p$ 
satisfies the adjoint system given by
\begin{equation}
\label{eq10}
\begin{gathered}
- ^{abc}_{T}D_{t}^{\alpha}p - \lambda \triangle p -F p
= D^{*}D y^{*}, \, t \in [0, T], \\
\frac{\partial p}{\partial \nu} = 0
\mbox{ on } \partial \Omega \times (0, T),\\
p(T, x) = D^{*}D y^{*}(T, x),
\end{gathered}
\end{equation}
where $D$ is the matrix defined by
$$
\left(
\begin{array}{ccc}
0 & 0 & 0 \\
0 & 1 & 0 \\
0 & 0 & 0 \\
\end{array}
\right).
$$

Similarly to the existence result of Theorem~\ref{thm6}, 
one can show that the solution of the adjoint 
system \eqref{eq10} exists.

\begin{theorem}
\label{thm}
Given an optimal control $u^{*}$ and corresponding state $y^{*}$, 
there exists a solution $p$ to the adjoint system. Furthermore, $u^{*}$
can be characterized, in explicit form, as
\begin{equation}
\label{eq11}
\begin{split}
u^{*}= & \min \left( 1, \max \left(0, -\frac{1}{\theta}G^{*}p\right) \right) \\
= &  \min \left( 1, \max \left(0, -\frac{y_{1}^{*}}{\theta}(p_{1}-p_{3})\right) \right).
\end{split}
\end{equation}
\end{theorem}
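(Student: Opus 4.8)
The plan is to derive the characterization of $u^{*}$ from a first-order optimality condition for the reduced cost functional. Since $u^{\varepsilon}=u^{*}+\varepsilon u$ is admissible for small $\varepsilon>0$ whenever the perturbation direction keeps $u^{\varepsilon}$ in $U_{ad}$, optimality of $u^{*}$ forces $\frac{d}{d\varepsilon}J(y^{\varepsilon},u^{\varepsilon})\big|_{\varepsilon=0^{+}}\geq 0$. First I would compute this directional derivative explicitly. Using $y^{\varepsilon}=y^{*}+\varepsilon z^{\varepsilon}$ with $z^{\varepsilon}\to z$ as in \eqref{eq23}, and the fact that $z$ solves the linearized (sensitivity) system $^{abc}_{0}D_{t}^{\alpha}z-\lambda\triangle z=Fz+Gu$ with $z(0)=0$, differentiating the three quadratic terms of \eqref{eq4} gives
\begin{equation*}
\frac{d}{d\varepsilon}J\Big|_{\varepsilon=0^{+}}
= 2\int_{Q_{T}} y_{2}^{*}\,z_{2}\,dx\,dt
+ 2\int_{\Omega} y_{2}^{*}(T,\cdot)\,z_{2}(T,\cdot)\,dx
+ 2\theta\int_{Q_{T}} u^{*}\,u\,dx\,dt,
\end{equation*}
which, using the matrix $D$, can be written as $2\int_{Q_{T}}(D^{*}Dy^{*})\cdot z\,dx\,dt+2\int_{\Omega}(D^{*}Dy^{*}(T))\cdot z(T)\,dx+2\theta\int_{Q_{T}}u^{*}u\,dx\,dt$.

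Next I would eliminate $z$ in favour of the adjoint state $p$. Pairing the sensitivity equation with $p$ and integrating by parts in time via the formula in Proposition~\ref{prop4} (with the boundary terms at $t=0$ vanishing because $z(0)=0$), and integrating by parts in space using the homogeneous Neumann conditions on both $z$ and $p$, one transfers all derivatives onto $p$ and obtains
\begin{equation*}
\int_{Q_{T}}\big(-{}^{abc}_{T}D_{t}^{\alpha}p-\lambda\triangle p\big)\cdot z\,dx\,dt
+\frac{B(\alpha)}{1-\alpha}\int_{\Omega}p(x,T)\int_{0}^{T}z\,E_{\alpha,\alpha}[-\gamma(T-t)^{\alpha}]\,dt\,dx
=\int_{Q_{T}}(Fz+Gu)\cdot p\,dx\,dt.
\end{equation*}
Substituting the adjoint equation $-{}^{abc}_{T}D_{t}^{\alpha}p-\lambda\triangle p-Fp=D^{*}Dy^{*}$ and the terminal condition $p(T,x)=D^{*}Dy^{*}(T,x)$, the $Fz\cdot p$ terms cancel against $F^{*}p\cdot z$, the terminal pairing reproduces the $z(T)$ contribution to $J'$, and we are left with $\int_{Q_{T}}(D^{*}Dy^{*})\cdot z\,dx\,dt+\int_{\Omega}(D^{*}Dy^{*}(T))\cdot z(T)\,dx=\int_{Q_{T}}(Gu)\cdot p\,dx\,dt=\int_{Q_{T}}(G^{*}p)\,u\,dx\,dt$. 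Hence the optimality inequality collapses to
\begin{equation*}
\int_{Q_{T}}\big(G^{*}p+\theta u^{*}\big)\,u\,dx\,dt\;\geq\;0
\qquad\text{for all admissible perturbation directions }u.
\end{equation*}

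Finally, I would run the standard pointwise variational argument on $U_{ad}$. Because the constraint set is $0<u<1$ a.e., the admissible perturbations let $u$ be an arbitrary sign on the set where $0<u^{*}<1$, nonnegative where $u^{*}=0$, and nonpositive where $u^{*}=1$; a localization (choosing $u$ supported on small sets) then yields the pointwise Karush--Kuhn--Tucker relations: $G^{*}p+\theta u^{*}=0$ where $0<u^{*}<1$, $G^{*}p+\theta u^{*}\geq 0$ where $u^{*}=0$, and $G^{*}p+\theta u^{*}\leq 0$ where $u^{*}=1$. Solving for $u^{*}$ and projecting onto $[0,1]$ gives $u^{*}=\min\big(1,\max(0,-\tfrac{1}{\theta}G^{*}p)\big)$, and since $G=(-y_{1}^{*},0,y_{1}^{*})^{\mathsf T}$ we have $G^{*}p=y_{1}^{*}(p_{3}-p_{1})$, so $-\tfrac{1}{\theta}G^{*}p=-\tfrac{y_{1}^{*}}{\theta}(p_{1}-p_{3})$, which is exactly \eqref{eq11}. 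The existence of $p$ follows by applying Theorem~\ref{thm6} to the backward adjoint system after the time reversal $t\mapsto T-t$, which turns $-{}^{abc}_{T}D_{t}^{\alpha}$ into a forward ABC derivative with $L^{2}$ right-hand side $D^{*}Dy^{*}$ and $L^{2}$ terminal-turned-initial data. The main obstacle I anticipate is the careful justification of passing to the limit $\varepsilon\to 0^{+}$ in the differential quotient of $J$ — in particular, upgrading the weak convergence $z^{\varepsilon}\rightharpoonup z$ to enough strong convergence (via an Aubin--Lions compactness argument analogous to the one in the proof of Theorem~\ref{thm:3.1}, using the bound on $\partial_{t}z^{\varepsilon}$) so that the nonlinear-in-$y^{\varepsilon}$ terms in $F^{\varepsilon}z^{\varepsilon}$ and the quadratic terms $y_{2}^{\varepsilon}z_{2}^{\varepsilon}$ converge to their claimed limits, and handling the terminal term $z^{\varepsilon}(T)\to z(T)$ in $L^{2}(\Omega)$.
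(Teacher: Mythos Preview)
Your approach is essentially identical to the paper's: the paper also computes the differential quotient $\frac{J(u^{*}+\varepsilon h)-J(u^{*})}{\varepsilon}\geq 0$, rewrites it via the adjoint as $\int_{0}^{T}(G^{*}p+\theta u^{*},h)\,dt\geq 0$ for all admissible $h$, and then appeals to ``standard arguments of variations of $h$'' (citing \cite{ref27}) for the pointwise characterization --- you have simply filled in the details the paper omits. One small slip to fix: from $G^{*}p=y_{1}^{*}(p_{3}-p_{1})$ you get $-\tfrac{1}{\theta}G^{*}p=\tfrac{y_{1}^{*}}{\theta}(p_{1}-p_{3})$, not $-\tfrac{y_{1}^{*}}{\theta}(p_{1}-p_{3})$; the second line of \eqref{eq11} in the paper carries the same sign inconsistency with the first.
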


The proof of Theorem~\ref{thm} is classical and  follows exactly 
the same arguments as in \cite{ref27}, using the fact that the minimum 
of the objective function $J$ is achieved at $u^{*}$. For small $\varepsilon$ 
such that $u^{\varepsilon}= u^{*} +\varepsilon h \in U_{ad}$, one can prove that
\begin{equation*}
\begin{split}
\frac{J(u^{*} +\varepsilon h)-J(u)}{\varepsilon}& \geq 0,
\end{split}
\end{equation*}
equivalent to
\begin{equation*}
\begin{split}
\int_{0}^{T} (G^{*}p+ \theta u^{*}, h ) dt &\geq 0 
\quad \forall h \in U_{ad}.
\end{split}
\end{equation*}
The characterization result is obtained 
by standard arguments of variations of $h$.

% -----------------------------------------------

\section{Numerical results}
\label{sec7}

In this section, we study the effect of the order of differentiation $\alpha$ 
to the dynamic of infection in space during a given time interval. 
We can mention two cases: absence and presence of vaccination.  
In the following, we consider a domain of $10km^2$ square grid, 
which represents a city for the population under consideration. 
We assume that the infection originates in the subdomain 
$\Omega_1=cell(1,1)$ when the disease starts at the lower left corner 
of $\Omega$. At $t =0$, we assume that the susceptible people are 
homogeneously distributed with $50$ in each $1km^2$ cell 
except at the subdomain $\Omega_1$ of $\Omega$, where we introduce $7$ 
infected individuals and keep $43$ susceptible there.  
The parameters and initial values are given in Table~\ref{tab1}.

\begin{table}	
\caption{Values of initial conditions and parameters.}
\label{tab1}
\centering
\begin{tabular}{C{2.5cm} C{4cm} C{4cm}} \hline 
Symbol & Description (Unit) &  Value  \\ \hline \hline  
$S_0(x,y)$ & Initial susceptible population $(people/km^2)$ 
& $43$ for $(x,y)\in\Omega_1$ \hspace{1.5cm}  $50$ for $(x,y)\notin\Omega_1$ \\ \hline  
$I_0(x,y)$ & Initial infected population $(people/km^2)$ & $7$ for $(x,y)\in\Omega_1$ 
\hspace{1.5cm} $0$ for $(x,y)\notin\Omega_1$  \\ \hline  
$R_0(x,y)$ & Initial recovered population $(people/km^2)$ 
& $0$ for $(x,y)\in\Omega_1$ \hspace{1.5cm} $0$ for $(x,y)\notin\Omega_1$\\ \hline 
$\lambda_1=\lambda_2=\lambda_3$ & Diffusion coefficient ($km^2/day$) & 0.6 \\
\hline $\mu$ & Birth rate $(day^{-1})$ & 0.02 \\
\hline $d$ & Natural death rate $(day^{-1})$ & 0.03 \\
\hline $\beta$ & Transmission rate $((people/km^2)^{-1}.day^{-1})$ & 0.9 \\
\hline $r$ & Recovery rate $(day^{-1})$ & 0.04 \\
\hline $T$ & Final time $(day)$ & 20 \\	\hline
\end{tabular}
\end{table}

We have used MATLAB to implement the so-called forward-backward 
sweep method \cite{ref28} to solve our fractional optimal 
control problem \eqref{eq1}--\eqref{eq5}. The state system 
and the adjoint equations are numerically integrated using 
an approximation of the (left/right) ABC fractional derivative, 
based on a explicit finite difference method \cite{ref25,ref29}.  
The algorithm can be summarized as follows:
\begin{algorithm}
\caption{Forward-backward sweep method}
\begin{algorithmic}[1]
\STATE Set $n$ the number of subdivisions, $h$ the step size, 
$m$ the number of time steps, $\tau$ the step time, 
$\delta = 0.001$ the tolerance, and $test = -1$.
\STATE Initiate the control $u_{old}$, the state 
$(S_{old},I_{old},R_{old})$ and adjoint $((p_{old})_1,(p_{old})_2,(p_{old})_3)$.
\WHILE {$test < 0$}
\STATE Solve the state equation \eqref{eq1} for $(S,I,R)$ with initial 
guess $(S_{0},I_{0},R_{0})$, using  an explicit finite 
difference method forward in time.
\STATE Solve the adjoint equation \eqref{eq10} for $(p_1,p_2,p_3)$ 
using the transversality conditions $p_i(T)$ and $(S,I,R)$ backward in time.
\STATE Update the control using the gradient equation \eqref{eq11} to reach $u$.
\STATE Compute the tolerance criteria $\psi_1=\delta\|S\|-\|S-S_{old}\|$, 
$\psi_2=\delta\|I\|-\|I-I_{old}\|$, 
$\psi_3=\delta\|R\|-\|R-R_{old}\|$, 
$\psi_4=\delta\|p_1\|-\|p_1-(p_{old})_1\|$, 
$\psi_5=\delta\|p_2\|-\|p_2-(p_{old})_2\|$, 
$\psi_6=\delta\|p_3\|-\|p_3-(p_{old})_3\|$, 
$\psi_7=\delta\|u\|-\|u-u_{old}\|$, and calculate 
$test=\min\{\psi_1,\psi_2,\psi_3,\psi_4,\psi_5,\psi_6,\psi_7\}$.
\ENDWHILE
\end{algorithmic}
\end{algorithm}	

% ----------------------------------

\subsection{Fractional $\alpha$-dynamics without control}

Figures~\ref{fig01}, \ref{fig02} and \ref{fig03} present the numerical 
results with different values of $\alpha$ in the case of absence of control. 
We observe that the susceptible individuals are transferred to the infected class 
while the disease spreads from the lower left corner to the upper right corner. 
In Figure~\ref{fig01}, for $\alpha=1$, we can see that the epidemic takes 
$20$ days to cover the entire area (50 infected per cell in all $\Omega$), 
but in Figures~\ref{fig02} and \ref{fig03} this is not the case 
for $\alpha=0.95$ and $\alpha=0.9$. It is clear that the number of individuals 
infected is almost $44$ per cell in the upper right corner.
%----------------------------------------------------------
\begin{figure}[ht!]
\centering
\includegraphics[scale=0.35]{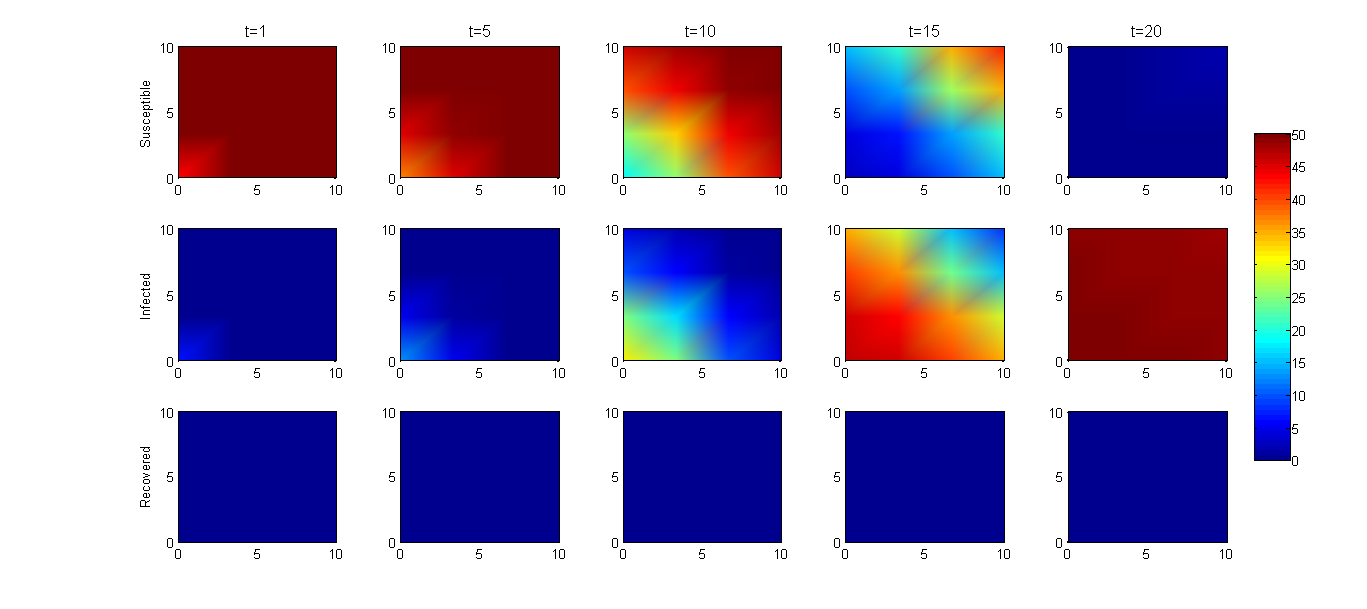}
\caption{Dynamic of the system without control for $\alpha=1$.}
\label{fig01}
\end{figure}
\begin{figure}[ht!]
\includegraphics[scale=0.35]{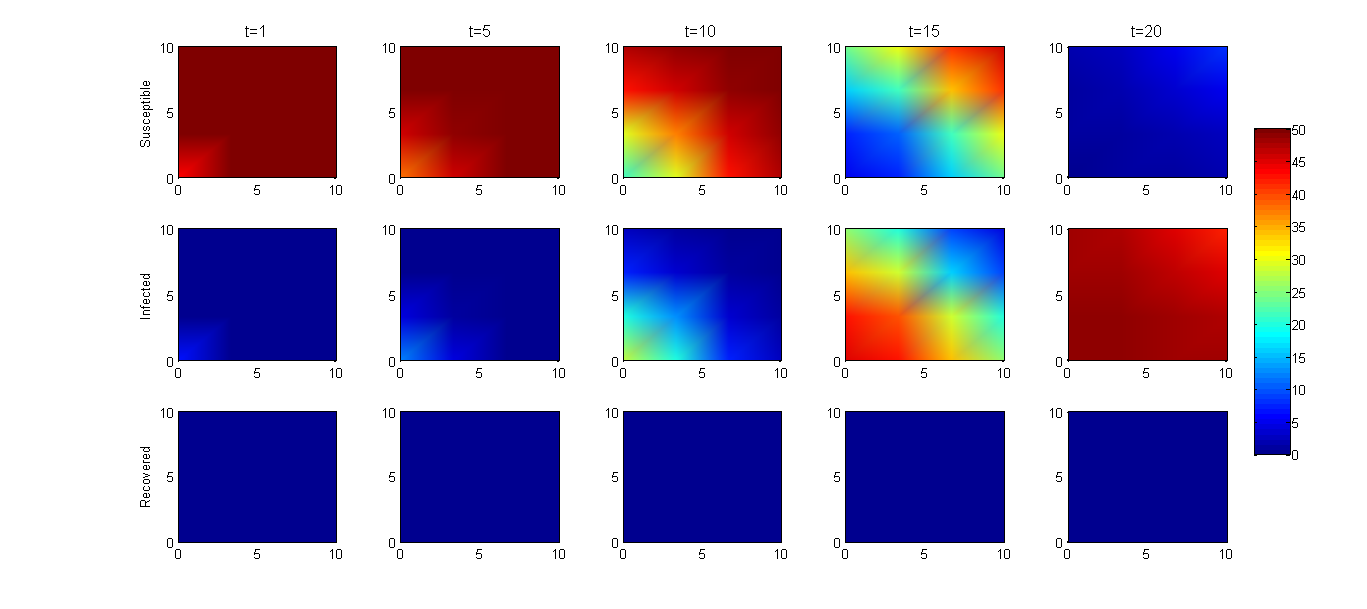}
\caption{Dynamic of the system without control for $\alpha=0.95$.}
\label{fig02}
\end{figure}
\begin{figure}[ht!]
\centering
\includegraphics[scale=0.35]{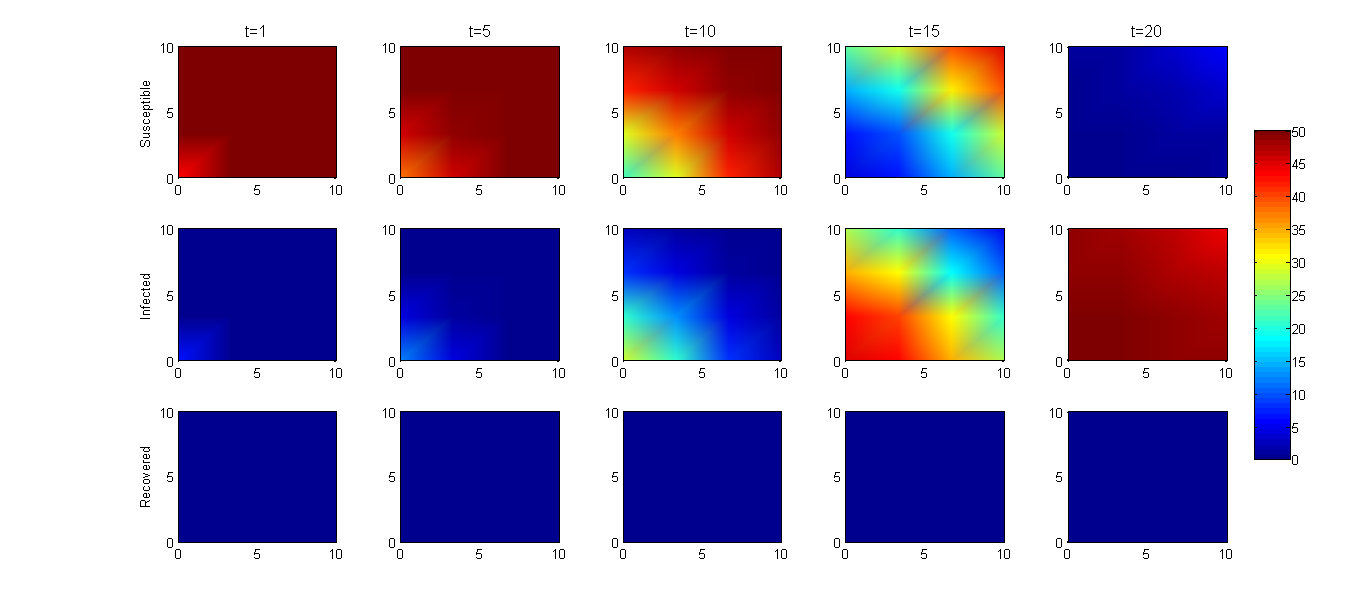}
\caption{Dynamic of the system without control for $\alpha=0.9$.}
\label{fig03}
\end{figure}

%----------------------------------------------------------

\subsection{Fractional $\alpha$-dynamics with optimal vaccination strategy}

We compare the infection prevalence over a period of $20$ days 
in the presence of the vaccination strategy. We note that the 
susceptible individuals are transferred to the recovered class 
(see Figures~\ref{fig04}, \ref{fig05} and \ref{fig06}). 
In Figure~\ref{fig04}, we see that the number of infected people 
is $40$ per cell and $10$ per cell for recovered individuals. 
In Figures~\ref{fig05} and \ref{fig06}, we have almost $5$ 
susceptible people per cell, $35$ infected people per cell, 
and $10$ recovered individuals per cell.
%----------------------------------------------------------
\begin{figure}[ht!]
\centering
\includegraphics[scale=0.35]{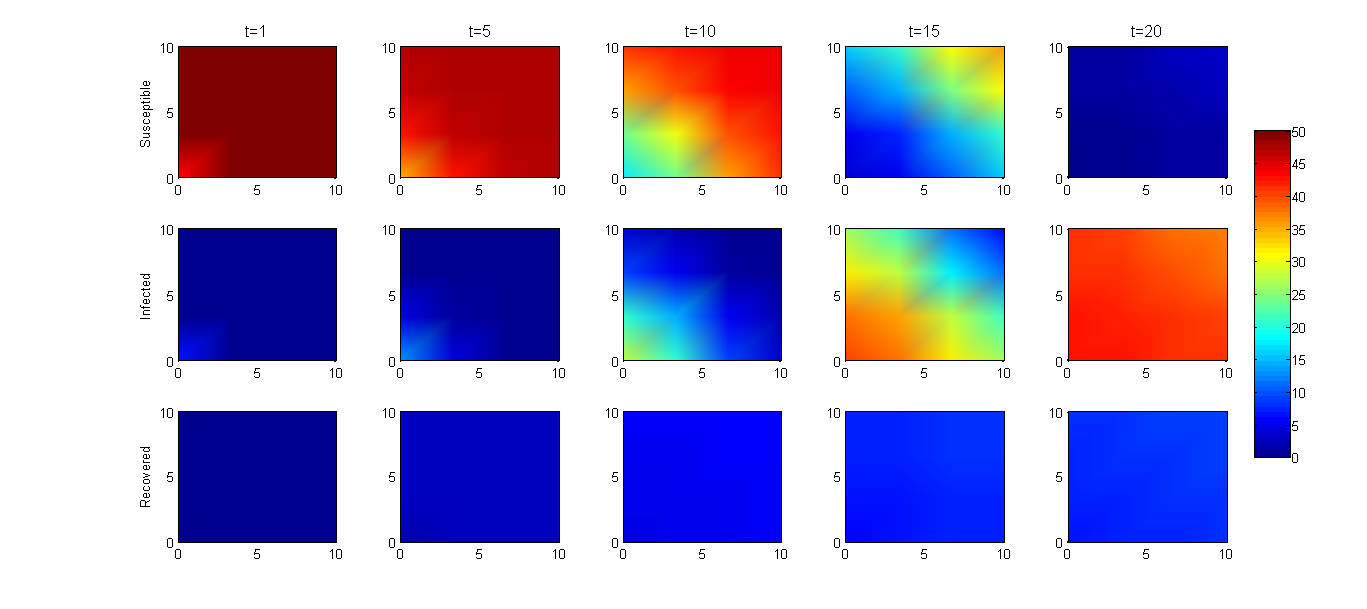}
\caption{Dynamic of the system with control for $\alpha=1$.}
\label{fig04}
\end{figure}
\begin{figure}[ht!]
\includegraphics[scale=0.35]{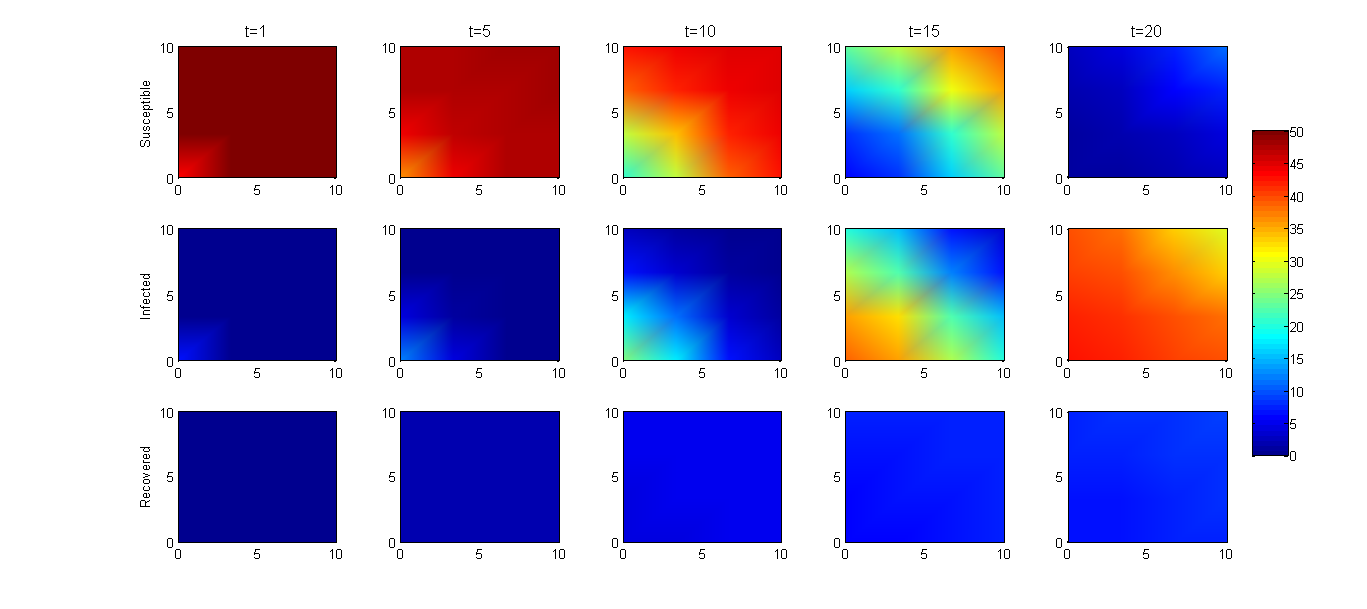}
\caption{Dynamic of the system with control for $\alpha=0.95$.}
\label{fig05}
\end{figure}
\begin{figure}[ht!]
\centering
\includegraphics[scale=0.35]{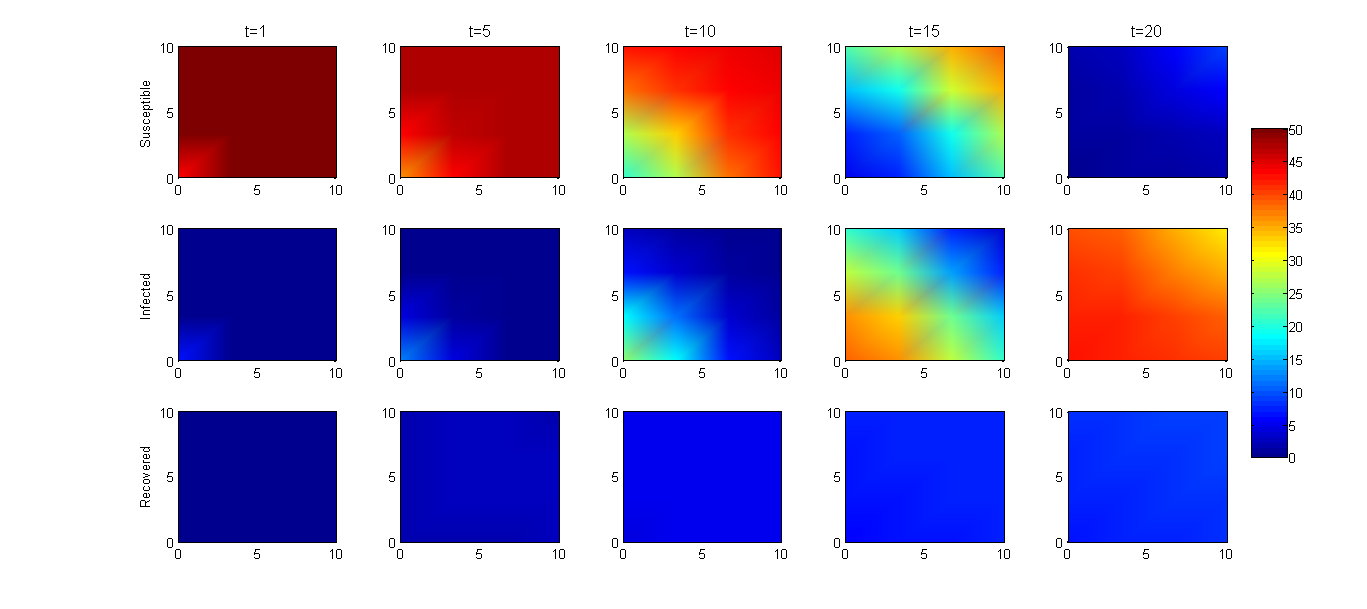}
\caption{Dynamic of the system with control for $\alpha=0.9$.}
\label{fig06}
\end{figure}
%----------------------------------------------------------
Next, we investigate the effect of the order $\alpha$ 
to the value of the cost functional $J$ in absence and presence 
of vaccination. Before that, we present the results 
in Table ~\ref{tab2} and Table ~\ref{tab3}, respectively.

\begin{table}	
\caption{Values of the cost functional $J$ 
without control for different $\alpha$.}
\label{tab2}
\centering
\begin{tabular}{C{1.5cm} C{3cm} C{3cm} C{3cm}}
\hline $\alpha$ & 0.9 &  0.95 & 1  \\
\hline J & $7.4350 e^{+04}$ & $7.1586 e^{+04}$ & $7.7019 e^{+04}$  \\
\hline
\end{tabular}
\end{table}

\begin{table}	
\caption{Values of the cost functional $J$ 
with control for different $\alpha$.}
\label{tab3}
\centering
\begin{tabular}{C{1.5cm} C{3cm} C{3cm} C{3cm}}
\hline $\alpha$ & 0.9 &  0.95 & 1  \\
\hline J & $4.9157 e^{+04}$ & $4.7489 e^{+04}$ & $5.2503 e^{+04}$  \\
\hline
\end{tabular}
\end{table}

We note that the functional $J$ decreases under the effect of vaccination 
for different values of $\alpha$, and the value of $J$ is optimal 
as $\alpha=0.95$. Furthermore, the results obtained in fractional order 
cases show that the spread of the disease takes more than 20 days 
to cover the entire space with the same cost of the vaccination 
strategy in the case of integer derivatives.

% ---------------------------------------

\section{Conclusion}
\label{sec8}

In this study, we investigated the optimal vaccination strategy 
for a fractional SIR model. Interactions between susceptible, 
infected, and recovered are modeled by a system of partial 
differential equations with Atangana--Baleanu--Caputo  
fractional time derivative. We proved existence of solutions 
to our fractional parabolic state system as well as the 
existence of an optimal control. For a given objective functional 
$J$, an optimal control is characterized in terms of the corresponding 
state and adjoint variables. In order to control the infection, 
we have compared the dynamics of our system with different 
values of $\alpha$. We noticed that the values of $J$ decreases 
under the effect of vaccination for different values of $\alpha$.
Moreover, with the presence of an optimal vaccination strategy, 
we found that the smallest value of the cost-functional $J$ 
is obtained when $\alpha=0.95$. Then, an analysis of the proposed 
fractional order strategy with a well chosen  fractional order $\alpha$ 
shows that it is more cost-effective than the classical strategy. 
Finally, the results obtained when $\alpha$ takes a fractional value 
show that the spread of the disease takes more than $20$ days 
to cover the entire space with the same cost of the vaccination 
strategy in the case of integer-order derivatives.

%-------------------------------------------------------

\section*{Acknowledgments}

Torres is supported by Funda\c{c}\~ao 
para a Ci\^encia e a Tecnologia (FCT),
within project UIDB/04106/2020 (CIDMA).

%-------------------------------------------------------

\renewcommand{\bibname}{References}

% ---------------------------------------------

\medskip

% ---------------------------------------------

\end{document}